\documentclass[12pt]{article}%
\usepackage{amsmath}
\usepackage{amsfonts}
\usepackage{amssymb}
\usepackage{graphicx}%
\setcounter{MaxMatrixCols}{30}
%TCIDATA{OutputFilter=latex2.dll}
%TCIDATA{Version=5.00.0.2552}
%TCIDATA{CSTFile=article_vn3.cst}
%TCIDATA{Created=Thursday, December 24, 2009 11:39:55}
%TCIDATA{LastRevised=Monday, December 03, 2012 21:39:11}
%TCIDATA{<META NAME="GraphicsSave" CONTENT="32">}
%TCIDATA{<META NAME="SaveForMode" CONTENT="1">}
%TCIDATA{<META NAME="DocumentShell" CONTENT="Standard LaTeX\Blank - Standard LaTeX Article">}
%TCIDATA{Language=American English}
\setlength{\textwidth}{7in} \setlength{\textheight}{8.7in}
\setlength{\topmargin}{0pt} \setlength{\headsep}{0pt}
\setlength{\headheight}{0pt} \setlength{\oddsidemargin}{-20pt}
\setlength{\evensidemargin}{-20pt} \makeatletter
\newfont{\footsc}{cmcsc10 at 8truept}
\newfont{\footbf}{cmbx10 at 8truept}
\newfont{\footrm}{cmr10 at 10truept}
\pagestyle{plain}
\newtheorem{theorem}{Theorem}

\newtheorem{lemma}[theorem]{Lemma}

\newtheorem{proposition}[theorem]{Proposition}

\newenvironment{proof}[1][Proof]{\noindent{\textbf {#1}  }}  {\hfill$\Box$\bigskip}

\def\blfootnote{\xdef\@thefnmark{}\@footnotetext}
\begin{document}

\title{A lower bound for the sum of the two largest signless Laplacian eigenvalues}
\author{Leonardo de Lima\thanks{Department of Production Engineering, Federal
Center of Technological Education, Rio de Janeiro, Brazil;\textit{email:
llima@cefet-rj.br, leolima.geos@gmail.com}} \thanks{Research supported by CNPq
Grant 305867/2012--1 and FAPERJ 102.218/2013.} \ and$\;$Carla Oliveira\thanks{Department of
Mathematical and Estatistics, National School of Statistics, Rio de Janeiro RJ, Brazil ;
\textit{email: carla.oliveira@ibge.gov.br}} \thanks{Research supported by CNPq Grant 305454/2012--9.} }
\date{}
\maketitle

\begin{abstract}
Let $G$ be a graph of order $n \geq 3$ with sequence degree given as $d_{1}\left(  G\right) \geq \ldots \geq d_{n}\left(  G\right)$ and
let $\mu_1(G),\ldots, \mu_n(G)$ and $q_1(G), \ldots, q_{n}(G)$ be the Laplacian and signless Laplacian eigenvalues of $G$ arranged in non increasing order, respectively. Here, we consider the Grone's inequality [R. Grone, Eigenvalues and degree sequences of graphs, Lin. Multilin. Alg. 39 (1995) 133--136]
$$ \sum_{i=1}^{k} \mu_{i}(G)  \geq \sum_{i=1}^{k} d_{i}(G)+1$$ and prove that for $k=2$, the equality holds if and only if $G$ is the star graph $S_{n}.$
The signless Laplacian version of Grone's inequality is known to be true when $k=1.$ In this paper, we prove that it is also true for $k=2,$ that is,
$$q_{1}(G)+q_{2}(G) \geq d_1(G)+d_2(G)+1$$ with equality if and only if $G$ is the star $S_{n}$ or the complete graph $K_{3}.$  When $k \geq 3$, we show a counterexample.  \medskip

\textbf{Keywords: }\emph{signless Laplacian; Laplacian; two largest eigenvalues; sequence degree; lower bound.}

\textbf{AMS classification: }\emph{05C50, 05C35}

\end{abstract}

\section{Introduction and main results}

Define $G=(V,E)$ as a finite simple graph on $n$ vertices. The sequence degree of $G$ is denoted by $d(G) = \left(d_1(G),d_2(G),\ldots,d_n(G) \right)Ò$ such that $ d_1(G) \geq d_2 (G)\geq \ldots \geq d_n(G).$ Write $A$ for the adjacency matrix of $G$ and let $D$ be the diagonal matrix of the row-sums of $A,$ i.e., the degrees of $G.$
The matrix $L\left(  G\right)  =A-D$ is called the \emph{Laplacian} or the $L$-matrix of $G$ and the matrix $Q\left(  G\right)  =A+D$ is called the \emph{signless Laplacian} or the $Q$-matrix of $G$. As usual, we shall index the eigenvalues of $L\left(  G\right)  $ and $Q\left(  G\right)  $ in non-increasing order and denote them as $\mu_{1}(G) \geq \mu_{2}(G) \geq \ldots \geq \mu_{n}(G)$ and $q_{1}(G) \geq q_{2}(G) \geq \ldots \geq q_{n}(G),$ respectively.
We denote the following graphs on $n$ vertices: the complete graph $K_{n}$; the star $S_{n}$ and the complete bipartite graph $K_{n_1,n_2}$, such that $n_1 \geq n_2$ and $n=n_1+n_2.$

The main result of this paper is about to the sum of the two largest $Q-$eigenvalues of a graph. There is quite a few papers on that subject and a
contribution to this area have been made very recently by Ashraf \textit{et al.}, \cite{AOT13}, Oliveira \textit{et al.}, \cite{OLRC13}, and Li and Tian, \cite{LT14}. An useful result to the area was obtained by Schur \cite{Sc23} as the following:

\begin{theorem}[Schur's inequality, \cite{Sc23}]
Let $A$ be a real symmetric matrix with eigenvalues $\lambda_{1} \geq  \ldots \geq \lambda_{n}$ and diagonal elements $t_{1} \geq  \ldots \geq t_{n}.$ Then
$ \sum_{i=1}^{k} \lambda_{i} \geq  \sum_{i=1}^{k} t_{i} .$
\end{theorem}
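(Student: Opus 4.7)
My plan is to deduce Schur's inequality from the Ky Fan extremal characterization of partial eigenvalue sums. The central identity is
$$\sum_{i=1}^{k} \lambda_i \;=\; \max\bigl\{ \operatorname{tr}(X^T A X) : X \in \mathbb{R}^{n\times k},\; X^T X = I_k \bigr\},$$
valid for any real symmetric $A$. Once this is in hand, picking $X = [\,e_{\pi(1)}\,|\,\cdots\,|\,e_{\pi(k)}\,]$, where $\pi$ is a permutation of the indices for which $A_{\pi(i)\pi(i)} = t_i$, immediately gives $\operatorname{tr}(X^T A X) = \sum_{i=1}^k t_i$, and the maximum property yields $\sum_{i=1}^k \lambda_i \geq \sum_{i=1}^k t_i$.

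The substantive step is establishing the extremal identity. I would proceed via the spectral theorem: writing $A = U\Lambda U^T$ with $U$ orthogonal and $\Lambda = \operatorname{diag}(\lambda_1,\ldots,\lambda_n)$, and substituting $X = UY$ (so $Y^T Y = I_k$ still), the objective becomes $\operatorname{tr}(Y^T \Lambda Y) = \sum_{j=1}^n \lambda_j c_j$, where $c_j$ is the squared norm of the $j$-th row of $Y$. The coefficients $c_j$ are the diagonal entries of the orthogonal projector $YY^T$, hence lie in $[0,1]$, and their sum is $\operatorname{tr}(YY^T)=k$. The linear program $\max \sum_j \lambda_j c_j$ under $0 \leq c_j \leq 1$ and $\sum_j c_j = k$ is then solved by $c_1 = \cdots = c_k = 1$, delivering the value $\sum_{i=1}^k \lambda_i$, with the top-$k$ eigenspace as the optimizer.

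The main obstacle is really this small box of linear algebra — the observation that $YY^T$ is an orthogonal projector of rank $k$, whose diagonal entries therefore sum to $k$ and lie in $[0,1]$. After that, Schur's inequality follows without further computation. As a parallel route, one could instead invoke majorization: since $A_{ii} = \sum_j U_{ij}^2 \lambda_j$, the diagonal of $A$ equals the image of $(\lambda_1,\ldots,\lambda_n)$ under the doubly stochastic matrix $(U_{ij}^2)$, and the Hardy--Littlewood--P\'olya theorem then yields the inequality (and indeed the sharper statement that the eigenvalue vector majorizes the sorted diagonal).
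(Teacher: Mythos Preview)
Your argument is correct. The Ky Fan extremal characterization is established cleanly: the reduction to $\sum_j \lambda_j c_j$ with $c_j$ the diagonal of the rank-$k$ projector $YY^T$ gives the upper bound, and the choice $Y=[e_1\,|\,\cdots\,|\,e_k]$ shows attainment. The specialization $X=[e_{\pi(1)}\,|\,\cdots\,|\,e_{\pi(k)}]$ then yields Schur's inequality immediately. The alternative majorization route you sketch (via the doubly stochastic matrix $(U_{ij}^2)$ and Hardy--Littlewood--P\'olya) is also sound and in fact closer to Schur's original formulation.

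As for comparison with the paper: there is nothing to compare. The paper states Theorem~1 as a classical result attributed to Schur and does not supply a proof; it is invoked only as background motivation for Grone's inequality. So your write-up goes beyond what the paper itself provides.
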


In 1994, Grone and Merris, \cite{GM94}, proved that $ \mu_{1}(G) \geq d_1(G)+1$ with equality if and only if there exists a vertex of $G$ with degree $n-1.$ Based on Schur's inequality, Grone, in \cite{G95}, proved a more general bound to the Laplacian eigenvalues related to the sum of the vertex degrees:
\begin{equation}
 \sum_{i=1}^{k} \mu_{i} \geq  1 + \sum_{i=1}^{k} d_{i} . \label{in1}
\end{equation}
%After that Li and Pan, \cite{LP00}, proved that $\mu_2(G)\geq d_2(G)$ with equality if $G$ is a complete bipartite graph.
Considering Grone's inequality, \cite{G95}, we proved that the extremal graph to the case $k=2$ is the star graph $S_{n}.$ Hence, we state the equality conditions to the Grone's inequality when $k=2$ as presented in the following theorem.

\begin{theorem}\label{th4}
Let $G$ be a simple connected graph on $n \geq 3$ vertices. Then
$$ \mu_{1}(G)+\mu_{2}(G) \geq d_1(G)+ d_{2}(G) + 1 $$
with equality if and only if $G$ is a star $S_{n}.$
\end{theorem}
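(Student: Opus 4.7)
The lower bound $\mu_1(G)+\mu_2(G)\ge d_1(G)+d_2(G)+1$ is Grone's inequality (1) at $k=2$, so the substance of the proof is the equality characterization. The implication ``$G=S_n\Rightarrow$ equality'' is immediate: $L(S_n)$ has spectrum $(n,1,\ldots,1,0)$, giving $\mu_1+\mu_2=n+1=(n-1)+1+1=d_1+d_2+1$.

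For the converse, assume $\mu_1+\mu_2=d_1+d_2+1$. The Grone--Merris inequality $\mu_1\ge d_1+1$ of \cite{GM94} forces $\mu_2\le d_2$, and its equality clause (a vertex of degree $n-1$) gives the natural dichotomy. Suppose first that $\mu_1=d_1+1$, so that $G$ has a vertex $v$ of degree $n-1$ and the assumed equality becomes $\mu_2(G)=d_2(G)$. Setting $H=G-v$, the block decomposition
$$L(G)=\begin{pmatrix} n-1 & -\mathbf{1}^{T} \\ -\mathbf{1} & L(H)+I_{n-1}\end{pmatrix}$$
together with a short check on eigenvectors of the form $(a,b\mathbf{1})$ and $(0,x)$ with $x\in\mathbf{1}^{\perp}$ shows that the Laplacian spectrum of $G$ is $\{0,n\}\cup\{1+\lambda : \lambda\in\mathrm{spec}(L(H))\text{ with eigenvector in }\mathbf{1}^{\perp}\}$. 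Thus $\mu_2(G)=\mu_1(H)+1$, and since $d_2(G)=d_1(H)+1$, the assumption $\mu_2(G)=d_2(G)$ reduces to $\mu_1(H)=d_1(H)$. If $H$ had any edge, applying Grone--Merris to the component of $H$ containing a vertex of maximum degree would force $\mu_1(H)\ge d_1(H)+1$, a contradiction; therefore $H$ is edgeless and $G=S_n$.

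It remains to rule out the case $\mu_1>d_1+1$, in which $G$ has no universal vertex and $d_1\le n-2$. Here the plan is to establish the auxiliary bound $\mu_2(G)\ge d_2(G)$, which combined with the strict inequality $\mu_1>d_1+1$ would give $\mu_1+\mu_2>d_1+d_2+1$, contradicting the assumed equality. For this bound I would use Cauchy interlacing on a principal submatrix: if some pair $u,u'$ with $d(u),d(u')\ge d_2$ is non-adjacent, then $L[\{u,u'\}]=\mathrm{diag}(d(u),d(u'))$ has second eigenvalue $\min(d(u),d(u'))\ge d_2$, and interlacing transfers this bound to $\mu_2(G)$. The harder sub-case is when all vertices of degree $\ge d_2$ form a clique; there the $2\times 2$ interlacing bound drops below $d_2$, and one is forced to interlace with a $3\times 3$ submatrix $L[\{v_1,v_2,w\}]$ for a suitable $w$ outside the clique, then bound the middle root of its characteristic polynomial. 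This clique sub-case is the main obstacle of the proof.
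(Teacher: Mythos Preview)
Your route differs substantially from the paper's. The paper never splits on whether $G$ has a universal vertex; instead it fixes vertices $u,v$ realizing $d_1,d_2$, passes to the subgraph $H$ consisting of all edges incident with $u$ or $v$ (so that $H\cong\mathcal H(p,r,s)$ or $H\cong\mathcal G(p,r,s)$ with $d_i(H)=d_i(G)$ for $i=1,2$), and uses edge-deletion interlacing to reduce everything to these two three-parameter families, which are dispatched via quotient matrices and explicit characteristic polynomials (Propositions~\ref{pr4}, \ref{pr5}, \ref{pr3}). Since $\mathcal H(p,r,s)$ is bipartite, the Laplacian case largely rides on the signless-Laplacian computations already carried out for Theorem~\ref{th3}, with only the non-bipartite $\mathcal G$ cases redone by hand.

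Your Case~1 (universal vertex present) is correct and clean. The gap is precisely where you locate it: in Case~2 the ``clique sub-case'' is only a sketch, and the proposed $3\times3$ interlacing does not close it. For instance, in the double star obtained by joining the centers of two copies of $K_{1,t}$ one has $d_1=d_2=t+1$, the high-degree vertices are adjacent, and for any leaf $w$ the middle eigenvalue of $L[\{v_1,v_2,w\}]$ lies strictly below $t+1$; so no single $3\times3$ principal submatrix witnesses $\mu_2\ge d_2$ there. That said, the auxiliary inequality you are aiming for, $\mu_2(G)\ge d_2(G)$, is a known theorem: it is the case $k=2$ of Guo's conjecture $\mu_k\ge d_k-k+2$, proved by Brouwer and Haemers (\emph{Linear Algebra Appl.} \textbf{429} (2008), 2131--2135). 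Citing that result closes your argument immediately, and the resulting proof is shorter and more conceptual than the paper's polynomial computations; the paper's reduction, on the other hand, has the merit of treating the $L$- and $Q$-versions in one framework.
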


The signless Laplacian version of Grone's inequality could be stated as the following:
\begin{equation}
 \sum_{i=1}^{k} q_{i}(G) \geq  1+ \sum_{i=1}^{k} d_{i}(G). \label{in2}
\end{equation}

Motivated by Grone's inequality, we considered to study whether the signless Laplacian version of the inequality (\ref{in1}) is true.
The case $k=1$ has been proved in the literature (see Lemma \ref{lemmaq1} in Section 2). For $k \geq 3,$ we show a counterexample such that inequality (\ref{in2}) is not true when $G$ is the star graph plus one edge.

The main result of this paper proves that inequality (\ref{in2}) is true for $k=2$ as stated by the next theorem.

\begin{theorem}\label{th3} Let $G$ be a simple connected graph on $n \geq 3$ vertices. Then
\[
q_{1}\left(  G\right) + q_{2}\left(  G\right)  \geq d_{1}\left(  G\right) + d_{2}\left(  G\right) + 1.
\]
Equality holds if and only if $G$ is one of the following graphs: the complete graph $K_{3}$ or a star $S_{n}.$
\end{theorem}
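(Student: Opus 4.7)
Schur's inequality applied to $Q(G)$ gives the weaker bound $q_1(G)+q_2(G)\geq d_1+d_2$, so my task is to gain one extra unit in the sum. The plan is to reduce to a combinatorially simple subgraph whose signless Laplacian spectrum can be computed. Let $v_1,v_2$ be vertices of $G$ realising $d_1,d_2$, and let $H$ be the spanning subgraph of $G$ whose edges are exactly those incident to $v_1$ or $v_2$. Since $Q(G)=Q(H)+Q(G\setminus E(H))$ with each signless Laplacian positive semidefinite, $Q(G)\succeq Q(H)$, and Weyl's inequality gives $q_i(G)\geq q_i(H)$ for every $i$. When $d_2\geq 2$ the vertices $v_1,v_2$ still have degrees $d_1,d_2$ in $H$ while every other vertex of $H$ has degree at most $2$, so it suffices to prove
\[
q_1(H)+q_2(H)\geq d_1+d_2+1
\]
for the resulting ``double star'' $H$. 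The degenerate case $d_2=1$ is handled separately: then $G$ is forced to be a star $S_n$ and the direct computation $q_1(S_n)=n$, $q_2(S_n)=1$ verifies equality.

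The graph $H$ is parametrised by $d_1,d_2$, the number $t:=|N_G(v_1)\cap N_G(v_2)|$ of shared leaves, and whether $v_1$ and $v_2$ are adjacent in $G$. Its automorphism group permutes the private leaves of $v_1$, the private leaves of $v_2$, and the $t$ shared leaves; hence $Q(H)$ decomposes into a ``symmetric'' block of size at most $5$, spanned by the indicator vectors of the orbits $\{v_1\},\{v_2\}$, shared leaves, private $v_1$-leaves, private $v_2$-leaves, with explicit characteristic polynomial of degree $\leq 4$, and an ``antisymmetric'' complement on which $Q(H)$ has only eigenvalues $1$ and $2$. The two largest eigenvalues of $Q(H)$ therefore come from the symmetric block, and I verify $\lambda_1+\lambda_2\geq d_1+d_2+1$ either by manipulating Vieta's relations on the reduced polynomial or, more cleanly, by invoking Ky Fan's maximum principle with two well-chosen test vectors in the symmetric subspace (for instance, the $Q$-Perron vectors of the sub-stars at $v_1$ and $v_2$, Gram--Schmidt orthogonalised), whose Rayleigh sum meets the target.

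The main obstacle is the fully general sub-case in which both $v_1$ and $v_2$ are adjacent and $t\geq 1$: this regime includes $K_3$, the reduced polynomial is genuinely quartic with no obvious rational roots, and the cross-term bookkeeping in the test-vector approach must be pushed through carefully. Finally, for equality throughout we need both $Q(G)=Q(H)$---forcing every edge of $G$ to be incident to $v_1$ or $v_2$---and saturation of the double-star bound. Examining the reduced polynomial at equality pins down the two possibilities: $d_2=1$ forces $G=S_n$, and $(d_1,d_2,t)=(2,2,1)$ with $v_1\sim v_2$ forces $G=K_3$, matching the statement.
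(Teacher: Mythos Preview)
Your reduction is exactly the paper's: the subgraph $H$ of edges incident to $v_1$ or $v_2$ is precisely the paper's $\mathcal{H}(p,r,s)$ (when $v_1\not\sim v_2$) or $\mathcal{G}(p,r,s)$ (when $v_1\sim v_2$), with $p=t$, and the paper likewise passes from $Q(H)$ to the quotient matrix on the orbit partition and analyses its characteristic polynomial. So the strategy is identical.

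Where you stop, however, is where the paper's real work lies. You write ``I verify $\lambda_1+\lambda_2\geq d_1+d_2+1$'' by Vieta or by a Ky~Fan test-vector argument, but neither is carried out; the paper devotes Propositions~\ref{pr4}--\ref{pr2} to this, splitting into half a dozen subcases and evaluating the degree-$5$ (not $\leq 4$, as you wrote) characteristic polynomial at points such as $d_1+\frac32$, $d_2-\frac12$, and $d_1+1+\frac{p}{n}$ to locate $q_1(H)$ and $q_2(H)$ separately. The case you flag as the ``main obstacle'' ($v_1\sim v_2$ and $t\geq 1$) is exactly Proposition~\ref{pr2}, whose four subcases and lengthy polynomial expressions show this is not a formality. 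Your proposed test vectors---orthogonalised Perron vectors of the two sub-stars---are natural, but the overlap on the $t$ shared leaves makes the Rayleigh-sum estimate delicate, and you have not shown it clears $d_1+d_2+1$; until that is done the argument is a plan, not a proof. One further correction on equality: Weyl only yields $q_1(G)+q_2(G)=q_1(H)+q_2(H)$, not $Q(G)=Q(H)$, so you still need a separate step (for instance, strict monotonicity of $q_1$ under edge addition in a connected graph) to conclude that $G$ has no edges beyond those of $H$.
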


The paper is organized such that preliminary results are presented in the next section and the main proofs are in the Section \ref{prova}.

\section{Preliminary results}

%Let $L(G)=D(G)-A(G)$ be the Laplacian matrix of a graph $G$ and $\mu_{1}(G)$ and $\mu_{2}(G)$ its two largest eigenvalues.
%immediately consequence of
%\begin{lemma}[\cite{GM94}]
%Let $G$ be a simple connected graph with at least one edge
%and the maximum degree $d_1(G)$. Then
%$$ \mu_{1}(G) \geq d_1(G) + 1 $$
%with equality if and only if there exists a vertex is adjacent all other vertices in $G.$
%\end{lemma}
%
%\begin{lemma}[\cite{LP00}]
%Let $G$ be a simple connected graph with $n \geq 3$ vertices. Denote
%by $d_2(G)$ the second largest degree of G. Then
%$$ \mu_2(G)\geq d_2(G) $$
%with equality if $G$ is a complete bipartite graph.
%\end{lemma}

%The proof of a similar result of Proposition \ref{prop} for the signless Laplacian is not immediate and we devoted our attention to prove that in the rest of this paper.

Define the graph $S_{n}^{+}$ as the graph obtained from a star $S_{n}$ plus an edge. We introduce the paper showing that inequality (\ref{in2}) does not hold for $k \geq 3.$
\begin{proposition}\label{pr1}
Let $G$ be isomorphic to $S_{n}^{+}.$ For $k \geq 3,$  $$\sum_{i=1}^{k} q_{i}(G) <  1+ \sum_{i=1}^{k} d_{i}(G).$$
\end{proposition}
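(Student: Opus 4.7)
The strategy is to compute the full signless Laplacian spectrum of $S_n^+$ using its large automorphism group, and then to compare the partial sums $\sum_{i=1}^k q_i(G)$ and $1+\sum_{i=1}^k d_i(G)$ directly.

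First I would fix notation: let $v_1$ be the center of the star (of degree $n-1$), let $v_2, v_3$ be the two endpoints of the added edge (each of degree $2$), and let $v_4, \dots, v_n$ be the remaining pendants (each of degree $1$). Then the ordered degree sequence of $S_n^+$ is $(n-1, 2, 2, 1, \dots, 1)$, so for $3 \leq k \leq n$,
$$\sum_{i=1}^{k} d_i(G) = (n-1)+2+2+(k-3) = n+k,$$
and the target becomes $\sum_{i=1}^k q_i(G) < n+k+1$.

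Next I would diagonalize $Q(S_n^+)$ using the orbits $\{v_1\}$, $\{v_2,v_3\}$, $\{v_4,\dots,v_n\}$ under the automorphism group. The vector $e_{v_2}-e_{v_3}$, together with any orthogonal basis of vectors supported on $\{v_4, \dots, v_n\}$ with zero sum, gives $1+(n-4)=n-3$ independent $Q$-eigenvectors with eigenvalue $1$. On the orthogonal $3$-dimensional ``symmetric'' subspace spanned by $e_{v_1}$, $\tfrac{1}{\sqrt 2}(e_{v_2}+e_{v_3})$, and $\tfrac{1}{\sqrt{n-3}}(e_{v_4}+\cdots+e_{v_n})$, the matrix of $Q$ is
$$B=\begin{pmatrix} n-1 & \sqrt{2} & \sqrt{n-3}\\ \sqrt{2} & 3 & 0\\ \sqrt{n-3} & 0 & 1\end{pmatrix},$$
and expanding $\det(\lambda I-B)$ yields the cubic $p(\lambda)=\lambda^3-(n+3)\lambda^2+3n\lambda-4$.

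A quick sign check gives $p(0)=-4<0$, $p(1)=2n-6\geq 0$, $p(3)=-4<0$, $p(n)=-4<0$, and $p(n+3)=3n^2+9n-4>0$, so the three roots of $p$ lie in $(0,1]$, $[1,3)$, and $(n,n+3)$, while Vieta gives their sum as $n+3$. Interleaving with the $n-3$ eigenvalues equal to $1$ identifies $q_1\in(n,n+3)$, $q_2\in[1,3)$, $q_3=\cdots=q_{n-1}=1$, and $q_n\in(0,1]$, with $q_1+q_2+q_n=n+3$. Hence for $3\leq k\leq n-1$,
$$\sum_{i=1}^k q_i(G) = q_1+q_2+(k-2) = (n+3-q_n)+(k-2) < n+k+1,$$
using $q_n>0$ (which holds because $S_n^+$ contains the triangle $v_1v_2v_3$, so $Q(S_n^+)$ is positive definite); and for $k=n$ one has $\sum_{i=1}^n q_i(G) = 2|E(S_n^+)| = 2n<2n+1$. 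The only non-routine step is producing the cubic $p(\lambda)$ from $B$; once it is in hand, the inequality follows immediately in each case.
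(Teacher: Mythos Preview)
Your argument is correct. The quotient matrix $B$ and its characteristic polynomial $p(\lambda)=\lambda^3-(n+3)\lambda^2+3n\lambda-4$ are computed correctly, the sign checks locate the three non-trivial eigenvalues in $(0,1)$, $(1,3)$, $(n,n+3)$ for $n\geq 4$, and Vieta's relation $q_1+q_2+q_n=n+3$ together with $q_n>0$ (non-bipartiteness) gives $q_1+q_2<n+3$, from which the inequality for every $3\leq k\leq n-1$ follows at once; the case $k=n$ is just the trace identity.

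Your route differs from the paper's. The paper does not compute the quotient polynomial; instead it quotes from \cite{OLRC13} the ready-made bounds $n<q_1(S_n^+)<n+\tfrac{1}{n}$ and $3-\tfrac{2.5}{n}<q_2(S_n^+)<3-\tfrac{1}{n}$, together with $q_3=\cdots=q_{n-1}=1$, and uses $q_n<d_n$ from \cite{Das10}. Summing the cited upper bounds yields the same key estimate $q_1+q_2<n+3$. Thus both proofs converge on the same inequality, but yours is fully self-contained and actually sharper in the sense that it gives the exact identity $q_1+q_2=n+3-q_n$ rather than a numerical upper bound. The paper's version is shorter only because it outsources the spectral analysis of $S_n^+$ to references. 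A minor remark: your orbit decomposition tacitly assumes $n\geq 4$ (so that the pendant class $\{v_4,\dots,v_n\}$ is non-empty); the paper's cited bounds likewise need $n\geq 4$, and the degenerate case $S_3^+=K_3$ can be checked by hand if desired.
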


\begin{proof}
Let $G$ be isomorphic to $S_{n}^{+}.$ In this case, $d_1(G)=n-1,d_2(G)=d_3(G)=2$ and $d_4(G)=\ldots=d_{n}(G)=1.$ From Oliveira \emph{et al.} \cite{OLRC13}, we have
$q_3(G)=\ldots=q_{n-1}(G) = 1$ and also
\begin{eqnarray}
n <& q_1(G) <& n + \frac{1}{n} \nonumber \\
3 - \frac{2.5}{n} <& q_2(G) <& 3 - \frac{1}{n}. \nonumber
\end{eqnarray}
From Das in \cite{Das10}, $q_{n}(G) < d_{n}(G)$ and then we get
\begin{eqnarray}
0 \leq& q_{n} (G)<& 1. \nonumber
\end{eqnarray}

Since for $k \geq 3,$
\begin{eqnarray}
1+\sum_{i=1}^{k} d_{i}(G) = n+k+1 \nonumber
\end{eqnarray}
and
\begin{eqnarray}
\sum_{i=1}^{k} q_{i}(G) < n+k+1, \nonumber
\end{eqnarray}
then the result follows.
\end{proof}

The following two results are important to our purpose here.

\begin{lemma}[\cite{CRS07}]\label{lemmaq1}
Let $G$ be a connected graph on $n \geq 4$ vertices. Then, $$q_{1}(G) \geq d_{1}(G)+1$$
with equality if and only if $G$ is the star $S_{n}.$
\end{lemma}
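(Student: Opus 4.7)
The plan is to apply the Rayleigh quotient characterization $q_1(G) \geq x^T Q(G) x / x^T x$ with a carefully scaled test vector supported on a maximum-degree vertex and its neighbors. Let $v$ be a vertex with $d(v) = d_1(G)$ (which I will abbreviate $d_1$), set $N(v) = \{u_1,\ldots,u_{d_1}\}$, and define
\[
x_v = d_1, \qquad x_{u_i} = 1 \text{ for } i = 1,\ldots,d_1, \qquad x_w = 0 \text{ otherwise.}
\]
The choice $x_v = d_1$ (rather than $x_v = 1$) is motivated by the Perron eigenvector of $Q(S_n)$, which takes value $n-1$ at the center and $1$ at each leaf; I expect any smaller scaling to leave the elementary degree estimate below one unit short.

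First I will compute, using $Q = A + D$, that $x^T x = d_1(d_1 + 1)$ and
\[
x^T Q x \;=\; d_1^3 + 2 d_1^2 + 2T + S,
\]
where $T$ denotes the number of edges of the subgraph induced on $N(v)$ and $S = \sum_{u \in N(v)} d(u)$. Substituting into the Rayleigh quotient and comparing to $d_1 + 1$, the desired inequality collapses, after clearing denominators, to
\[
S + 2T \;\geq\; d_1,
\]
which is immediate: each $u \in N(v)$ is adjacent to $v$ and so $d(u) \geq 1$, yielding $S \geq d_1$, while $T \geq 0$.

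For the equality case, if $q_1(G) = d_1 + 1$ then the two inequalities $q_1(G) \geq x^T Q x / x^T x$ and $S + 2T \geq d_1$ must both be tight. Tightness of the latter forces $T = 0$ and $d(u_i) = 1$ for every $i$, so every neighbor of $v$ is a leaf with no further neighbors. Connectedness of $G$ then forces $V(G) = \{v\} \cup N(v)$, i.e.\ $d_1 = n-1$ and $G \cong S_n$. Conversely, a direct computation gives $q_1(S_n) = n = d_1(S_n) + 1$.

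The main obstacle is simply the choice of test vector. The natural attempt $x_v = x_{u_i} = 1$ yields only $(3 d_1 + 2T + S)/(d_1+1)$, which is strictly less than $d_1 + 1$ whenever $v$ has many low-degree neighbors. Rescaling $x_v$ by $d_1$ so that the test vector mirrors the structure of the extremal graph $S_n$ is the single key idea that makes the routine lower bound $S \geq d_1$ close the gap cleanly and yields the equality characterization for free.
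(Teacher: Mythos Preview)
The paper does not prove this lemma; it is quoted from \cite{CRS07} and used as a black box. Your Rayleigh-quotient argument is correct and self-contained: the computation $x^TQx = d_1^3 + 2d_1^2 + 2T + S$ and the reduction to $S + 2T \ge d_1$ are accurate, and the equality analysis (forcing $T=0$, every neighbor of $v$ a leaf, hence $G\cong S_n$ by connectedness) is clean. One small remark: in the equality case you do not actually need the first tightness condition (that $x$ is a $q_1$-eigenvector); the collapse of $S+2T\ge d_1$ alone already pins down $G$, so you may streamline that sentence. Your proof in fact works for all $n\ge 2$, so the hypothesis $n\ge 4$ in the stated lemma is not sharp, which is consistent with how the paper later invokes the result for $n\ge 3$.
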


\begin{lemma}[\cite{Das10}]\label{lemmaq2}
Let $G$ be a graph. Then $$q_{2}(G) \geq d_{2}(G) -1.$$
\end{lemma}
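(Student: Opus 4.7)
The plan is to apply Cauchy's interlacing theorem to a well-chosen $2 \times 2$ principal submatrix of $Q(G)$. Let $v_1$ and $v_2$ be distinct vertices of $G$ whose degrees realize $d_1(G)$ and $d_2(G)$ respectively, and let $M$ denote the principal submatrix of $Q(G) = A(G) + D(G)$ indexed by $\{v_1, v_2\}$, namely
$$M = \begin{pmatrix} d_1(G) & \varepsilon \\ \varepsilon & d_2(G) \end{pmatrix},$$
where $\varepsilon \in \{0,1\}$ records whether $v_1 v_2 \in E(G)$. Cauchy interlacing applied to the real symmetric matrix $Q(G)$ and its $2\times 2$ principal submatrix $M$ yields $q_2(G) \geq \lambda_{\min}(M)$, so the lemma is reduced to establishing $\lambda_{\min}(M) \geq d_2(G) - 1$.

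The next step is an explicit computation of the smaller eigenvalue of the $2\times 2$ matrix $M$:
$$\lambda_{\min}(M) = \frac{d_1(G) + d_2(G) - \sqrt{(d_1(G) - d_2(G))^2 + 4\varepsilon}}{2}.$$
If $\varepsilon = 0$, then $M$ is diagonal and $\lambda_{\min}(M) = d_2(G) > d_2(G) - 1$, so the bound is immediate. If $\varepsilon = 1$, I would set $t = d_1(G) - d_2(G) \geq 0$ and rearrange the desired inequality $\lambda_{\min}(M) \geq d_2(G) - 1$ to the equivalent form $t + 2 \geq \sqrt{t^2 + 4}$. Squaring both sides, which are nonnegative, gives $t^2 + 4t + 4 \geq t^2 + 4$, i.e., $4t \geq 0$, and this is true for every $t \geq 0$.

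There is essentially no deep obstacle; the proof is a one-step application of interlacing followed by a routine $2\times 2$ eigenvalue computation. The only mildly delicate point is ensuring two distinct top-degree vertices exist, which is automatic whenever $n \geq 2$, the natural range in which $d_2(G)$ is defined. Note that the argument does not even require $G$ to be connected. This same interlacing framework, applied to larger principal submatrices and combined with finer information about edge structure and degree multiplicities, is the template I would later use to attack the sum bound $q_1(G) + q_2(G) \geq d_1(G) + d_2(G) + 1$ in Theorem \ref{th3}, where the naive $2\times 2$ estimate alone turns out to be too weak.
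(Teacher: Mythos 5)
Your argument is correct and complete: the paper itself gives no proof of this lemma, importing it directly from Das \cite{Das10}, and your derivation via Cauchy interlacing applied to the $2\times 2$ principal submatrix of $Q(G)$ indexed by two vertices realizing $d_1(G)$ and $d_2(G)$ is the standard way this bound is established in the literature. The interlacing direction is used correctly ($q_2(G)\geq \lambda_2(M)$ for a $2\times 2$ principal submatrix $M$), the eigenvalue computation is routine, and the reduction to $t+2\geq\sqrt{t^2+4}$ for $t=d_1(G)-d_2(G)\geq 0$ is valid. Nothing further is needed.
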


From Lemmas \ref{lemmaq1} and \ref{lemmaq2}, it is straightforward that $q_{1}(G)+ q_2(G) \geq d_1(G) + d_2(G).$ Here, we improved that lower bound to $d_1(G)+d_2(G)+1$ and in order to prove it we need to define the class of graphs $\mathcal{H}(p,r,s)$ that is obtained from $2K_1 \vee \overline{K_{p}}$ with additional $r$ and $s$ pendant vertices to the vertices $u$ and $v$ with largest and second largest  degree, respectively (see Figure 1).

The Propositions \ref{pr4} and \ref{pr5} will be useful to prove Theorem \ref{th3} and both present a lower bound to $q_2(G)$ within the family $\mathcal{H}(p,r,s).$

\begin{figure}[h]
\begin{center}
\includegraphics[width=2in, height=1.5in]{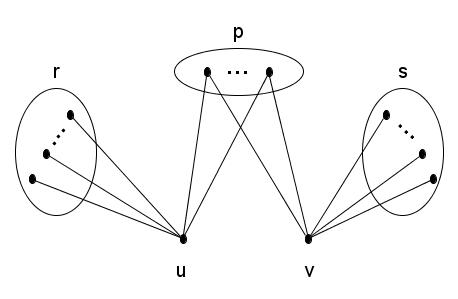}
\label{fig1}
\caption{Graph $\mathcal{H}(p,r,s)$.}
\end{center}
\end{figure}

\begin{proposition}\label{pr4}
For $p \geq 1$ and $r \geq s \geq 1,$  let $G$ be a graph on $n \geq 3$ vertices isomorphic to $\mathcal{H}(p,r,s).$ Then $$q_2(G) >
 d_2(G).$$
\end{proposition}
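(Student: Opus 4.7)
My plan is to apply the Courant--Fischer min-max characterization
\[q_2(G)=\max_{\dim U=2}\ \min_{0\neq x\in U}\frac{x^{T}Q(G)x}{x^{T}x},\]
producing a concrete two-dimensional subspace $U$ on which the Rayleigh quotient stays strictly above $d_2(G)=p+s$. The construction exploits the structure of $\mathcal{H}(p,r,s)$: the two high-degree vertices $u$ and $v$ have disjoint pendant neighborhoods and are non-adjacent, so test vectors supported on $u$ together with its pendants decouple (both in $x^{T}x$ and in $x^{T}Qx$) from test vectors supported on $v$ together with its pendants.

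Concretely, let $\hat{\mathbf{p}}_u$ be the unit vector equal to $1/\sqrt{r}$ on each pendant of $u$ and zero elsewhere, and define $\hat{\mathbf{p}}_v$ analogously with $s$ in place of $r$. For scalars $a_u,b_u,a_v,b_v$ set $z_u=a_u\,e_u+b_u\,\hat{\mathbf{p}}_u$ and $z_v=a_v\,e_v+b_v\,\hat{\mathbf{p}}_v$. A short computation based on $x^{T}Qx=\sum_{ij\in E}(x_i+x_j)^2$ yields
\[z_u^{T}Qz_u=(a_u,b_u)\,M_u\,(a_u,b_u)^{T},\quad M_u=\begin{pmatrix} p+r & \sqrt{r}\\ \sqrt{r} & 1 \end{pmatrix},\]
and the analogous identity for $z_v$ with $M_v$ obtained by replacing $r$ by $s$. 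Choosing $(a_u,b_u)$ and $(a_v,b_v)$ to be unit eigenvectors of $M_u,M_v$ for their larger eigenvalues $\lambda_+(M_u),\lambda_+(M_v)$ makes each Rayleigh quotient equal to the corresponding $\lambda_+$. Since $z_u$ and $z_v$ have disjoint supports and $G$ has no edge between them, we also obtain $z_u^{T}z_v=0$ and $z_u^{T}Qz_v=0$; hence $Q$ restricted to $U=\mathrm{span}(z_u,z_v)$ is diagonal in the orthonormal basis $\{z_u,z_v\}$, yielding
\[q_2(G)\geq\min\{\lambda_+(M_u),\,\lambda_+(M_v)\}.\]

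It remains to show $\lambda_+(M_v)>p+s$; the identical argument with $r$ in place of $s$ then gives $\lambda_+(M_u)>p+r\geq p+s$. The characteristic polynomial of $M_v$ is $\lambda^2-(p+s+1)\lambda+p=0$, so isolating the square root reduces $\lambda_+(M_v)>p+s$ (both sides positive because $p+s\geq 2$) to $(p+s+1)^2-4p>(p+s-1)^2$, which simplifies to the trivial $4s>0$. Combining, $q_2(G)>p+s=d_2(G)$. The only delicate step is the Rayleigh-quotient bookkeeping that produces $M_u$ and $M_v$; once these matrices are identified, the proof reduces to a single scalar inequality together with one invocation of the min-max principle.
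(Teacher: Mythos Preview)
Your proof is correct and takes a genuinely different route from the paper's. The paper exploits the equitable partition of $\mathcal{H}(p,r,s)$ into five cells (the two centres, the $p$ common neighbours, the $r$ pendants of $u$, the $s$ pendants of $v$) to reduce $Q(G)$ to a $5\times 5$ quotient matrix, computes its degree-$5$ characteristic polynomial $f(x,p,r,s)$ explicitly, and then checks by direct substitution that $f(p+s,p,r,s)>0$; combining this sign with the fact that $f<0$ on $(q_2,q_1)$ and with the lower bound $q_2\ge d_2-1$ forces $q_2>d_2$. Your argument bypasses all of this algebra: by choosing the two test vectors $z_u,z_v$ supported on the two ``star pieces'' $\{u\}\cup N_{\mathrm{pend}}(u)$ and $\{v\}\cup N_{\mathrm{pend}}(v)$, you exploit directly that these pieces are vertex-disjoint with no edge between them (since $uv\notin E$ in $\mathcal{H}$), so the Rayleigh form decouples and Courant--Fischer gives $q_2\ge\min\{\lambda_+(M_u),\lambda_+(M_v)\}$ for the $2\times 2$ matrices $M_u,M_v$. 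The resulting scalar inequality $\lambda_+(M_v)>p+s\Longleftrightarrow 4s>0$ is immediate. Your approach is shorter, conceptually cleaner, and avoids the heavy polynomial manipulation; the paper's quotient-matrix method, on the other hand, is more mechanical and extends uniformly to the related families $\mathcal{H}(p,r,0)$ and $\mathcal{G}(p,r,s)$ treated in the adjacent propositions, where the decoupling you rely on is not always available (e.g.\ once the edge $uv$ is present).
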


\begin{proof}
For $p \geq 1$, $r \geq s \geq 1$, consider $G$ as the graph isomorphic to $\mathcal{H}(p,r,s)$.  Labeling the vertices in a convenient way, we get

\[ Q(G) = \left( \begin{array}{c|c|c|c|c}
 p+r   &     0      &  \mathbf{1}_{1 \times p}   & \mathbf{1}_{1 \times r} & \mathbf{1}_{1 \times s}   \\  \hline
0       &    p+s    &   \mathbf{1}_{1 \times p}    & \mathbf{0}_{1 \times r}   & \mathbf{1}_{1 \times s} \\  \hline
 \mathbf{1}_{p \times 1} & \mathbf{1}_{p \times 1} & 2\mathbf{I}_{p \times p} & \mathbf{0}_{p \times r} & \mathbf{0}_{p \times s} \\ \hline
 \mathbf{1}_{r \times 1} & \mathbf{0}_{r \times 1} & \mathbf{0}_{r \times p} & \mathbf{I}_{r \times r} & \mathbf{0}_{r \times s} \\ \hline
 \mathbf{0}_{s \times 1} & \mathbf{1}_{s \times 1} & \mathbf{0}_{s \times p} & \mathbf{0}_{s \times r} & \mathbf{I}_{s \times s} \\
\end{array}\right ).
\]

Observe that $\mathbf{x}_{j} = e_{3}-e_{j}$, for $j=4,\ldots,p+2$ are eigenvectors associated to the eigenvalue $2$ which has multiplicity at least $p-1.$ Also, let us define $\mathbf{y}_{j} = e_{p+3}-e_{j}$ for each $j=p+4,\ldots,p+r+2$ and $\mathbf{z}_{j} = e_{p+r+3}-e_{j}$ for each $j=p+r+4,\ldots,p+r+s+2.$ Observe that $\mathbf{y}_{j}$ and $\mathbf{z}_{j}$ are eigenvectors associated to the eigenvalue $1$ with multiplicity at least $r+s-2.$ In all the cases the others $5$ eigenvalues are the same of the reduced matrix
$$M = \left(
\begin{array}{ccccc}
p+r &  0   &   p    &   r  & 0 \\
0  & p+s  &    p    &    0  &  s \\
1 &    1 &    2 &     0  & 0\\
1 &   0  &   0   & 1    &  0 \\
0 &   1  &  0   &   0   & 1 \\
\end{array}
 \right) .$$

The characteristic polynomial of M is given by $f(x,p,r,s) = x^{5}-(s+r+p+6)x^4+((r+p+3)s+(p+3)r+p^2+6p+5)x^{3}+((-2r-p-4)s+(-2p-2)r-2p^2-6p-2)x^{2}+((s+r)p+p^{2}+2p)x.$

Considering $r=s+k$, where $k \geq 0$ note that $f(d_2(G),p,s+k,s)=f(p+s,p,s+k,s) = (s+p) (ks^2 + s^2 + 2kps + 2p - 2ks - 2s + kp^2 - kp) > 0.$ As $f(0,p,r,s)<0$, if we take $q_2(G) < y < q_1(G),$ then $f(y,p,r,s)<0.$ So, since $f(d_2(G),p,r,s) > 0$ and from Lemma \ref{lemmaq2}, we get $q_2(G) > d_{2}(G).$

\end{proof}

\begin{proposition}\label{pr5}
For $p \geq 1$ and \ $r \geq 1,$ let $G$ be a graph on $n \geq 3$ vertices isomorphic to $\mathcal{H}(p,r,0).$ Then $$q_2(G) \geq d_2(G).$$ Equality holds if and only if $G=P_4$.
\end{proposition}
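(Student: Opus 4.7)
The plan is to mirror the proof of Proposition \ref{pr4}, specialized to $s=0$. I form the same kind of equitable partition of $V(G)$, now with only four cells: $\{u\}$, $\{v\}$, the coclique $V(\overline{K_{p}})$, and the set of $r$ pendants attached to $u$. As in that proof, the vectors $e_{3}-e_{j}$ ($j=4,\ldots,p+2$) and $e_{p+3}-e_{j}$ ($j=p+4,\ldots,p+r+2$) give eigenvalues $2$ of multiplicity at least $p-1$ and $1$ of multiplicity at least $r-1$, and the remaining four eigenvalues of $Q(G)$ are those of the quotient
$$
M = \begin{pmatrix} p+r & 0 & p & r\\ 0 & p & p & 0\\ 1 & 1 & 2 & 0\\ 1 & 0 & 0 & 1 \end{pmatrix}.
$$
Since $\mathcal{H}(p,r,0)$ is bipartite (with classes $\{u,v\}$ and $V(\overline{K_{p}})\cup\{\text{pendants}\}$), $0$ will be an eigenvalue of $M$; expanding $\det(xI-M)$ along the last column should factor it as $x\,g(x)$ with $g$ a cubic whose three real roots are the remaining eigenvalues of $M$.

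Once $g$ is in hand, the same cubic-sign argument used in Proposition \ref{pr4} applies. Order the roots as $\alpha_{3}\le\alpha_{2}\le\alpha_{1}$; Lemma \ref{lemmaq1} gives $\alpha_{1}=q_{1}(G)\ge d_{1}(G)+1>d_{2}(G)$. Since $g$ is a monic cubic, it is negative exactly on $(-\infty,\alpha_{3})\cup(\alpha_{2},\alpha_{1})$, so whenever $g(d_{2}(G))\ge 0$ the inequality $d_{2}(G)<\alpha_{1}$ forces $d_{2}(G)\le\alpha_{2}$. To identify $q_{2}(G)$ with $\alpha_{2}$ rather than with one of the background eigenvalues $1$ or $2$, it suffices to observe that $\alpha_{2}\ge d_{2}(G)\ge 2$ in every sub-case treated below.

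Everything then reduces to evaluating $g$ at $d_{2}(G)$. When $p\ge 2$ we have $d_{2}(G)=p$, and the expected collapse gives $g(p)=pr>0$, yielding $\alpha_{2}>p=d_{2}(G)$ and hence $q_{2}(G)>d_{2}(G)$, so equality is impossible in this range. When $p=1$ we have $d_{2}(G)=2$ and the analogous substitution gives $g(2)=r-1\ge 0$; for $r\ge 2$ the inequality is strict and again delivers $q_{2}(G)>d_{2}(G)$, while for $r=1$ the cubic factors as $g(x)=(x-2)(x^{2}-4x+2)$, so that $q_{1}(G)=2+\sqrt{2}$ and $q_{2}(G)=2=d_{2}(G)$; since $\mathcal{H}(1,1,0)=P_{4}$, this furnishes exactly the asserted equality case. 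The main obstacle I anticipate is purely algebraic: performing the cofactor expansion that produces $g$ cleanly and then simplifying $g(p)$ and $g(2)$ to their advertised closed forms.
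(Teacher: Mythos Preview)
Your proposal is correct and follows essentially the same route as the paper: pass to the $4\times 4$ quotient matrix $M$, compute its characteristic polynomial, evaluate it at $d_{2}(G)$, and use the resulting sign together with $q_{1}(G)>d_{2}(G)$ to pin down $q_{2}(G)$. Your treatment is in fact slightly more careful than the paper's, since you factor out the root $x=0$ (using bipartiteness), correctly split off the case $p=1$ where $d_{2}(G)=2$ (the degree of the lone vertex of $\overline{K_{1}}$) rather than $p$, and explicitly verify that $\alpha_{2}$ dominates the background eigenvalues $1$ and $2$; the paper evaluates uniformly at $p$ and isolates only the single case $p=r=1$.
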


\begin{proof}
For $p,r \geq 1,$ consider $G$ as the graph isomorphic to $\mathcal{H}(p,r,0)$.  Labeling the vertices in a convenient way, we get

\[ Q(G) = \left( \begin{array}{c|c|c|c}
 p+r   &     0      &  \mathbf{1}_{1 \times p}   & \mathbf{1}_{1 \times r} \\  \hline
0       &    p    &   \mathbf{1}_{1 \times p}    & \mathbf{0}_{1 \times r}  \\  \hline
 \mathbf{1}_{p \times 1} & \mathbf{1}_{p \times 1} & 2\mathbf{I}_{p \times p} & \mathbf{0}_{p \times r}  \\ \hline
 \mathbf{1}_{r \times 1} & \mathbf{0}_{r \times 1} & \mathbf{0}_{r \times p} & \mathbf{I}_{r \times r}  \\
\end{array}\right ).
\]

If $p=r=1$, then $q_2(G) = d_2(G)=2$. If $p \geq 1$ and $r \geq 2$, observe that $\mathbf{x}_{j} = e_{3}-e_{j}$, for $j=4,\ldots,p+2$ are eigenvectors associated to the eigenvalue $2$ which has multiplicity at least $p-1.$

Let us define $\mathbf{y}_{j} = e_{p+3}-e_{j}$ for each $j=p+4,\ldots,p+r+2.$ Observe that $\mathbf{y}_{j}$ are eigenvectors associated to the eigenvalue $1$ with multiplicity
at least $r-1.$ The others $4$ eigenvalues are the same of the reduced matrix

$$M = \left(
\begin{array}{cccc}
p+r &  0   &   p    &   r \\
0  & p  &    p    &    0  \\
1 &    1 &    2 &     0  \\
1 &   0  &   0   & 1  \\
\end{array}
 \right) .$$

The characteristic polynomial of $M$ is given by $f(x,p,r) = x^{4}+(-r-2p-3)x^3+((p+2)r+p^2+4p+2)x^{2}+(-pr-p^2-2p)x.$ As $f(-1,p,r)>0$, if we take $q_2(G) < y < q_1(G),$ then $f(y,p,r)<0.$

Note that $f(d_2(G),p,r)=f(p,p,r) = rp^2 > 0.$ Therefore, from Lemma \ref{lemmaq2}, we have $q_2(G) > d_{2}(G)$. So $q_2(G) \geq d_2(G)$ with equality if and only if $G=P_4$.

\end{proof}

Let $\mathcal{G}(p,r,s)$ be the graph isomorphic to $\mathcal{H}(p,r,s)$ plus the edge $(u,v),$ see Figure 2. The next proposition shows that Theorem \ref{th3} is true for the family $\mathcal{G}(0,r,s).$
\begin{figure}[h]
\begin{center}
\includegraphics[width=2in, height=1.5in]{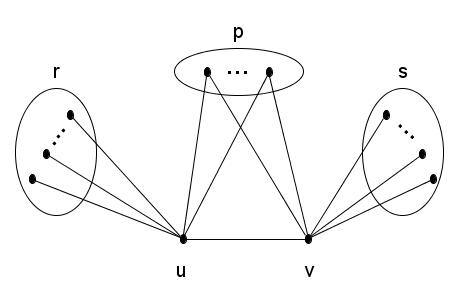}
\caption{Graph $\mathcal{G}(p,r,s)$.}
\end{center}
\label{fig1}
\end{figure}

\begin{proposition}\label{pr3}
For $r,s \geq 1,$ let $G$ be isomorphic to $\mathcal{G}(0,r,s).$  Then,
$$q_1(G) + q_{2}(G) > d_1(G) + d_{2}(G)+1.$$
\end{proposition}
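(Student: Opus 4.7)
Following the strategy of Propositions~\ref{pr4} and~\ref{pr5}, I would exploit the equitable partition of $V(G)$ into the four classes $\{u\}$, $\{v\}$, the $r$ pendants of $u$ and the $s$ pendants of $v$. Labelling the vertices in this order, the quotient matrix of $Q(G)$ is
\[
M = \begin{pmatrix} r+1 & 1 & r & 0 \\ 1 & s+1 & 0 & s \\ 1 & 0 & 1 & 0 \\ 0 & 1 & 0 & 1 \end{pmatrix}.
\]
Differences of standard basis vectors within each pendant class give eigenvectors of $Q(G)$ with eigenvalue $1$ and total multiplicity $r+s-2$, so the four remaining eigenvalues of $Q(G)$ are exactly the eigenvalues of $M$.

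Since $G$ is bipartite, $0$ is an eigenvalue of $Q(G)$ whose eigenvector is constant on the parts (take $(1,-1,-1,1)^T$ on the four classes), hence $0$ is an eigenvalue of $M$. Writing the characteristic polynomial $f(x)=\det(xI-M)=x\,g(x)$ with $g$ a cubic whose roots are $q_1(M)\ge q_2(M)\ge q_3(M)\ge 0$, Lemma~\ref{lemmaq1} applied to the non-star graph $G$ yields $q_1(G)\ge d_1(G)+1=r+2>1$; and a Perron--Frobenius argument (the Perron eigenvector of $Q(G)$ is constant on the parts, so lifts from an eigenvector of $M$) identifies $q_1(G)$ with $q_1(M)$, so $q_1(M)>1$.

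The key quantitative step is the single evaluation
\[
f(1)=\det(I-M)=rs,
\]
which follows from a short cofactor expansion along the last two rows of $I-M$. Hence $g(1)=rs>0$, and combined with $1-q_1(M)<0$ the factorization $g(1)=(1-q_1(M))(1-q_2(M))(1-q_3(M))$ forces $(1-q_2(M))(1-q_3(M))<0$; therefore $q_3(M)<1<q_2(M)$. Because $q_2(M)>1$ exceeds each of the multiplicity eigenvalues equal to $1$, we conclude $q_2(G)=q_2(M)$. Using $q_1(M)+q_2(M)+q_3(M)=\operatorname{tr}M=r+s+4$,
\[
q_1(G)+q_2(G)=q_1(M)+q_2(M)=r+s+4-q_3(M)>r+s+3=d_1(G)+d_2(G)+1,
\]
the strict inequality coming directly from $q_3(M)<1$.

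The main obstacle is really just the determinantal identity $f(1)=rs$; once this is established, the interlacing argument via $g(1)>0$ is essentially automatic. A small but necessary side-check is verifying that $q_2(G)=q_2(M)$ rather than $q_2(G)=1$, which is precisely what the strict inequality $q_2(M)>1$ guarantees.
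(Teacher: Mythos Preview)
Your argument is correct but follows a genuinely different route from the paper. Both proofs start from the same quotient matrix $M$ and its characteristic polynomial $f$. The paper evaluates $f$ at $d_2(G)=s+1$, obtaining $f(s+1)=s(s+1)(r-s)\ge 0$ under the standing assumption $r\ge s$, and hence $q_2(G)\ge d_2(G)$; it then invokes Lemma~\ref{lemmaq1} separately to get $q_1(G)>d_1(G)+1$ and adds the two bounds. You instead exploit the bipartiteness of $\mathcal G(0,r,s)$ to factor $f(x)=x\,g(x)$, evaluate at $x=1$ to obtain $g(1)=f(1)=rs>0$, and from the sign of $(1-q_1(M))(1-q_2(M))(1-q_3(M))$ deduce $q_3(M)<1<q_2(M)$; the trace identity $q_1(M)+q_2(M)+q_3(M)=r+s+4$ then gives the sum directly. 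Your route is pleasantly symmetric in $r$ and $s$ (no need for $r\ge s$) and bounds $q_1+q_2$ in one stroke rather than termwise, at the modest cost of the extra observation that $q_4(M)=0$ and the side-check $q_2(G)=q_2(M)$; the paper's route is more elementary in that it never needs bipartiteness or the trace, and it yields the slightly sharper individual estimate $q_2(G)\ge d_2(G)$.
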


\begin{proof}
For $r,s \geq 1,$ let $G$ be the graph isomorphic to $\mathcal{G}(0,r,s).$
Labeling the vertices of $G$ in a convenient way, we get

\[ Q(G) = \left( \begin{array}{c|c|c|c}
 r+1   &     1      &  \mathbf{1}_{1 \times r} & \mathbf{1}_{1 \times s}   \\  \hline
 1       &    s+1    &  \mathbf{0}_{1 \times r}   & \mathbf{1}_{1 \times s} \\  \hline
 \mathbf{1}_{r \times 1} & \mathbf{0}_{r \times 1} & \mathbf{I}_{r \times r} & \mathbf{0}_{r \times s} \\ \hline
 \mathbf{0}_{s \times 1} & \mathbf{1}_{s \times 1} & \mathbf{0}_{s \times r} & \mathbf{I}_{s \times s} \\
\end{array}\right ).
\]

Let us define $\mathbf{y}_{j} = e_{3}-e_{j}$ for each $j=4,\ldots,r+2,$  and $\mathbf{z}_{j} = e_{r+3}-e_{j}$
for each $j=r+4,\ldots,r+s+2.$ Observe that $\mathbf{y}_{j}$ and $\mathbf{z}_{j}$ are eigenvectors associated to the eigenvalue $1$ with multiplicity
at least $r+s-2.$ The others $4$ eigenvalues are the same of the reduced matrix
$$M = \left(
\begin{array}{cccc}
r+1 &  1   &  r  &  0 \\
1     & s+1  &  0  &  s \\
1     &   0  &  1  &  0\\
0     &   1  &  0  &  1 \\
\end{array}
 \right) .$$

The characteristic polynomial of $M$ is given by $f(x,r,s) = x^4+(-r-s-4)x^3+((r+2)s +2r+5)x^2 + (-r-s-2)x.$ As $f(0,r,s)>0$, if we take $q_2(G) < y < q_1(G),$ then $f(y,r,s)<0.$
Since $d_2(G)=s+1,$ we get
$f(d_2(G),r,s)=s(s+1)(r-s) \geq 0$ which implies $q_{2}(G) \geq d_{2}(G).$ From the equality conditions of Lemma \ref{lemmaq1}, $q_1(G) > d_1(G) + 1$ and the result follows.
\end{proof}

Next, in the Proposition \ref{pr2}, we present some bounds to $q_1(G)$ and $q_2(G)$ when $G$ is isomorphic to $\mathcal{G}(p,r,s)$ for $p \geq 1$, $r \geq s \geq 1.$

\begin{proposition}\label{pr2}
For $p \geq 1$, $r \geq s \geq 1,$ let $G$ be a graph on $n \geq 3$ vertices and isomorphic to $\mathcal{G}(p,r,s).$ Then

\begin{itemize}

\item[(i)] if $p = 1$ and $r=s$, then $q_1(G) > d_1(G) + \frac{3}{2}$ and $q_2(G) > d_2(G) -\frac{1}{2};$
%\item[(ii)] if $p = 1$ and $r \geq s+1$, then $q_2(G) > d_2(G);$
\item[(ii)] if $p \geq 2$ and $r=s$, then $q_1(G) > d_1(G) + 2;$
\item[(iii)] if $p \geq 1$ and $r \geq s+3$, then $q_2(G) > d_2(G);$
\item[(iv)] if $p \geq 1$ and $r \in \{ s+1,s+2 \},$ then $q_1(G) > d_1(G)+1+\frac{p}{n}$ and $q_2(G) > d_2(G)-\frac{p}{n}.$

\end{itemize}
\end{proposition}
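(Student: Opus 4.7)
The plan is to iterate the equitable-partition technique used in Propositions \ref{pr4}--\ref{pr3}. Set $n=p+r+s+2$ and partition the vertex set of $G\cong\mathcal{G}(p,r,s)$ into the five cells $\{u\}$, $\{v\}$, the $p$ common neighbors of $u$ and $v$, the $r$ pendants at $u$, and the $s$ pendants at $v$. This refinement is equitable, so $Q(G)$ has the eigenvalue $2$ with multiplicity at least $p-1$ (eigenvectors $e_3-e_j$) and the eigenvalue $1$ with multiplicity at least $r+s-2$ (pendant-difference vectors), plus the five eigenvalues of the quotient matrix
\[
M \;=\; \begin{pmatrix} p+r+1 & 1 & p & r & 0 \\ 1 & p+s+1 & p & 0 & s \\ 1 & 1 & 2 & 0 & 0 \\ 1 & 0 & 0 & 1 & 0 \\ 0 & 1 & 0 & 0 & 1 \end{pmatrix}.
\]
Write $f(x,p,r,s)=\det(xI-M)$ and let $\lambda_1\geq\cdots\geq\lambda_5$ be its roots. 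By Lemma \ref{lemmaq1}, $q_1(G)\geq d_1(G)+1>2$, so $q_1(G)=\lambda_1$; by Lemma \ref{lemmaq2}, $q_2(G)\geq d_2(G)-1=p+s\geq 2$, and in our parameter ranges one has $q_2(G)=\lambda_2$ (the exceptional corner $p=s=1$ is verified directly).

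Since $f$ is a monic quintic, $f(\alpha)<0$ places $\alpha\in(\lambda_2,\lambda_1)$, and $f(\beta)>0$ places $\beta\in(\lambda_3,\lambda_2)$ once $\beta>\lambda_3$ is known. Thus each of (i)--(iv) reduces to a single sign check of $f$ at an explicit point, after a crude interval bracket supplied by Lemmas \ref{lemmaq1}--\ref{lemmaq2}. Concretely: for (i) ($p=1$, $r=s$), compute $f(r+\frac{7}{2},1,r,r)$ and $f(r+\frac{3}{2},1,r,r)$; for (ii) ($p\geq 2$, $r=s$), compute $f(p+r+3,p,r,r)$; for (iii) ($r=s+k$, $k\geq 3$), compute $f(p+s+1,p,s+k,s)$, paralleling the calculation carried out in Proposition \ref{pr4}; and for (iv), with $k=r-s\in\{1,2\}$, compute $f\!\left(p+r+2+\frac{p}{n},\,p,\,s+k,\,s\right)$ and $f\!\left(p+s+1-\frac{p}{n},\,p,\,s+k,\,s\right)$. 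In every case the aim is to factor the outcome into a product of manifestly positive terms carrying the correct overall sign.

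The hard part will be item (iv): inserting the rational argument $d_2-p/n$ into a quintic yields, after clearing $n^5$, a dense polynomial in $(p,s,k)$ whose sign is not transparent, and the bulk of the proof will consist of reorganising it into a monomial decomposition with nonnegative coefficients for both $k=1$ and $k=2$. The symmetric cases (i)--(ii) should simplify considerably because $M$ admits a $u\leftrightarrow v$ symmetry that partially block-diagonalises it, while (iii) follows the template of Proposition \ref{pr4} almost verbatim. A minor nuisance is that boundary parameters (such as $p=s=1$, $r=s+1$) may make the symbolic certificate degenerate and have to be checked separately by direct numerical computation.
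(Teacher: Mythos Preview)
Your proposal is correct and follows essentially the same route as the paper: reduce $Q(G)$ via the equitable five-cell partition to the $5\times5$ quotient matrix $M$, compute the characteristic polynomial $f(x,p,r,s)$, and establish each of (i)--(iv) by evaluating $f$ at the target thresholds and reading off the sign, using Lemmas \ref{lemmaq1} and \ref{lemmaq2} to pin down which root interval the test point lies in. You have also correctly anticipated that item (iv) produces the lengthy polynomial certificates that dominate the paper's write-up.
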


\begin{proof}
For $p \geq 1$, $r \geq s \geq 1$, let $G$ be isomorphic to the graph $\mathcal{G}(p,q,r).$ Labeling the vertices in a convenient way, we get
\[ Q(G) = \left( \begin{array}{c|c|c|c|c}
 p+r+1   &     1      &  \mathbf{1}_{1 \times p}   & \mathbf{1}_{1 \times r} & \mathbf{1}_{1 \times s}   \\  \hline
 1       &    p+s+1    &   \mathbf{1}_{1 \times p}    & \mathbf{0}_{1 \times r}   & \mathbf{1}_{1 \times s} \\  \hline
 \mathbf{1}_{p \times 1} & \mathbf{1}_{p \times 1} & 2\mathbf{I}_{p \times p} & \mathbf{0}_{p \times r} & \mathbf{0}_{p \times s} \\ \hline
 \mathbf{1}_{r \times 1} & \mathbf{0}_{r \times 1} & \mathbf{0}_{r \times p} & \mathbf{I}_{r \times r} & \mathbf{0}_{r \times s} \\ \hline
 \mathbf{0}_{s \times 1} & \mathbf{1}_{s \times 1} & \mathbf{0}_{s \times p} & \mathbf{0}_{s \times r} & \mathbf{I}_{s \times s} \\
\end{array}\right ).
\]

Observe that $\mathbf{x}_{j} = e_{3}-e_{j}$, for $j=4,\ldots,p+2$ are eigenvectors associated to the eigenvalue $2$ which has multiplicity at least $p-1.$ Also, let us define $\mathbf{y}_{j} = e_{p+3}-e_{j}$ for each $j=p+4,\ldots,p+r+2,$  and $\mathbf{z}_{j} = e_{p+r+3}-e_{j}$
for each $j=p+r+4,\ldots,p+r+s+2.$ Observe that $\mathbf{y}_{j}$ and $\mathbf{z}_{j}$ are eigenvectors associated to the eigenvalue $1$ with multiplicity
at least $r+s-2.$ The others $5$ eigenvalues are the same of the reduced matrix
$$M = \left(
\begin{array}{ccccc}
p+r+1 &  1   &   p    &   r  & 0 \\
1  & p+s+1  &    p    &    0  &  s \\
1 &    1 &    2 &     0  & 0\\
1 &   0  &   0   & 1    &  0 \\
0 &   1  &  0   &   0   & 1 \\
\end{array}
 \right) .$$

The characteristic polynomial of $M$ is given by $f(x,p,r,s) = x^{5}+(-s-r-2p-6)x^4+((r+p+4)s+(p+4)r+p^2+8p+13)x^{3}+((-2r-2p-5)s+(-2p-5)r-2p^2-14p-12)x^{2}+((p+2)s+(p+2)r+p^{2}+12p+4)x-4p.$ Since all eigenvalues of $Q$ are nonnegative, the roots of $f(x,p,r,s)$ are also nonnegative. As $f(0,p,r,s)<0$, if we take $q_2(G) < y < q_1(G),$ then $f(y,p,r,s)<0.$ This fact will be useful for the proof of the following cases below.

The largest and second largest degree of $G$ are given by $d_{1}(G) = p+r+1$ and $d_2(G) = p+s+1,$ respectively. Using the characteristic polynomial $f(x,p,r,s),$ we prove the following cases:

\vspace{0.5cm}

(i) $p=1$ and $r=s$:  observe that $f(d_1(G)+3/2,1,r,s) = -\frac{(6r+25)(4r^2+12r+25)}{32}$ and $f(d_2(G)-1/2,1,r,s) = \frac{(2r-1)(20r^2+12r+5)}{32}.$ For $r = s\geq 1$, we get $f(d_1(G)+3/2,1,r,s) <0 $ and $f(d_2(G)-1/2,1,r,s)>0.$  As from Lemma \ref{lemmaq1}, $f(d_1(G)+1,1,r,s) < 0$ and we get $f(d_1(G)+3/2,1,r,s) <0$, so $q_1(G) > d_1(G)+3/2.$ Also, from Lemma \ref{lemmaq2}, $f(d_2(G)-1,1,r,s)>0 $ and we get $f(d_2(G)-1/2,1,r,s)>0$, then $q_2(G)>d_2(G)-1/2.$

\vspace{0.5cm}

%(ii) $p=1$ and $r = s + k$:  note that $f(d_2(G),1,s+k,s)= ks^3+3ks^2-2s^2+3ks-3s+2k-2 >0$ for any $k \geq 1$. Then, $q_2(G) > d_2(G)$ for $r \geq s+1.$

(ii) $p \geq 2$ and $r=s:$ note that $\;f(d_1(G)+2,p,r,r)= -(2r+3p+6)(pr-2r+p^{2}+p-2) <0$ and also from Lemma \ref{lemmaq1}, $\;f(d_1(G)+1,p,r,r)<0.$  So, we can conclude that $q_1(G) > d_1(G)+2.$

\vspace{0.5cm}

(iii) $p \geq 1$ and $r \geq s+3:$ note that $f(d_2(G),p,s+k,s)=ks^3+(3k-2)ps^2+((3k-5)p^2+(k+2)p-k)s+(k-3)p^3+(k+1)p^2 > 0 $ for $k \geq 3.$  Also, from Lemma \ref{lemmaq2}, we get $f(d_2(G)-1,p,s+k,s)>0.$  So, $q_2(G) > d_2(G).$

\vspace{0.5cm}

(iv) $p \geq 1$ and $r \in \{ s+1,s+2 \}:$ note that $n=2s+p+r+2$. Considering first $r=s+1,$ we get
$f(d_1(G)+1+p/(p+2s+4),p,s+2,s)= - (ns+np+p+n/n) <0$ and
$f(d_2(G)-p/(p+2s+4),p,s+2,s) = (2s^2+3ps+3s+p^2+2p)/(p+2s+4)^2 >0.$ Using Lemmas \ref{lemmaq1} and \ref{lemmaq2} analogous to the previous cases, we get $q_1(G)> d_1(G)+1+p/n$ and $q_2(G) > d_2(G)-p/n.$

Now, setting $r=s+2,$ we get $f(d_1(G)+1+p/(p+2s+4),p,s+2,s) <0$ and $f(d_2(G)-p/(p+2s+4),p,s+2,s)>0$ since
\begin{eqnarray}
& &f(d_1(G)+1+p/(p+2s+4),p,s+2,s) = -(32s^8+(208p+512)s^7+(568p^2+2992p+3456)s^6 + \nonumber \\
& &+ (836p^3+7120p^2+18048p+12800)s^5+(720p^4+8764p^3+36536p^2+59136p+28160)s^4+ \nonumber \\
& &+ (370p^5+5976p^4+36036p^3+98144p^2+113408p+36864)s^3+ \nonumber \\
& &+ (110p^6+2240p^5+18104p^4+72468p^3+145280p^2+126720p+26624)s^2+ \nonumber \\
& &+ (17p^7+420p^6+4326p^5+23576p^4+71008p^3+112000p^2+75776p+8192)s+ \nonumber \\
& &+ p^8+29p^7+368p^6+2616p^5+11024p^4+26960p^3+34944p^2+18432p)/(32s^5+\nonumber \\
& &+ (80p+320)s^4+(80p^2+640p+1280)s^3+\nonumber \\
& &+ (40p^3+480p^2+1920p+2560)s^2+(10p^4+160p^3+960p^2+2560p+2560)s+  \nonumber \\
& &+p^5+20p^4+160p^3+640p^2+1280p+1024) \nonumber
\end{eqnarray}
and
\begin{eqnarray}
& & f(d_2(G)-p/(p+2s+4),p,s+2,s) = (64s^8+(352p+640)s^7+(840p^2+3136p+2496)s^6+  \nonumber \\
& &+(1156p^3+6360p^2+11040p+4480)s^5+(1014p^4+7104p^3+18712p^2+19200p+2560)s^4+ \nonumber \\
& &+(581p^5+4814p^4+16220p^3+27048p^2+16640p-3072)s^3 +(211p^6+1998p^5+7832p^4+ \nonumber \\
& &+17128p^3+20400p^2+6144p-5120)s^2+(44p^7+470p^6+2044p^5+5120p^4+8720p^3+ \nonumber \\
& &+8288p^2+512p-2048)s+4p^8+48p^7+228p^6+600p^5 \nonumber \\
& & +1200p^4+2016p^3+1728p^2)/(p+2s+4)^5. \nonumber
\end{eqnarray}

Using Lemmas \ref{lemmaq1} and \ref{lemmaq2} analogously to the previous cases, we get $q_1(G)> d_1(G)+1+p/n$ and $q_2(G) > d_2(G)-p/n.$
\end{proof}

\section{Proofs}\label{prova}

In this section, we prove the main results of the paper.

\vspace{0.5cm}

\begin{proof}
\noindent \textbf{of Theorem \ref{th3}}
Let $G$ be a simple connected graph on $n \geq 3$ vertices. Assume that $u$ and $v$ are the vertices with largest and second largest degrees of $G$, i.e., $d(u) = d_{1}(G)$ and $d(v) = d_{2}(G).$
Take $H$ as a subgraph of $G$ containing $u$ and $v$ isomorphic to $\mathcal{H}(p,q,r)$ or $\mathcal{G}(p,r,s).$ Note that $d_1(G)+d_2(G)=d_1(H)+d_2(H)$ and from interlace Theorem (see \cite{HoJo85}),
$q_1(G)+q_2(G) \geq q_1(H)+q_2(H).$

%If $N(x)$ is the neighborhood set of the vertex $x$, define $X_1=N(u) \setminus N(v), X_2=N(v) \setminus N(u)$ and $Y=N(u) \cap N(v)$ such that $ p = |Y|, |X_1|= r \geq s =|X_2|.$
%%Set $Z = N(u) \cup N(v) \cup \{u\} \cup \{v\}$ and remove all vertices of $G$ outside $Z$. Remove all edges of the subgraph induced by $N(u)$ and also from the subgraph induced by $N(v).$
%The graph $H = (V(H),E(H))$ is defined as follows: $V(H)=X_1 \cup X_2 \cup Y \cup \{u,v\}$ and the edge set is $E(H)=\{uv_1|v_{1} \in X_1 \cup Y \} \cup \{ vv_{1}| v_{1} \in X_{2} \cup Y \} \cup \{ (u,v) \}$ or $E(H)=\{uv_1|v_{1} \in X_1 \cup Y \} \cup \{ vv_{1}| v_{1} \in X_{2} \cup Y \}$ depending whether $u$ and $v$ are adjacent or not.

Firstly, suppose that $H$ is isomorphic to $\mathcal{H}(p,r,s).$ Since $G$ is connected, the cases $p=0$ with any $r$ and $s$ are not possible.
If $p=1$ and $r=s=0,$ then $H = \mathcal{H}(1,0,0)= S_{3}$ and $q_1(H)+q_2(H)=4= d_1(H)+ d_2(H)+1.$ If $p \geq 2$ and $r=s=0,$ then $H = \mathcal{H}(p,0,0)= K_{2,p}$ and $q_1(H) +q_2(H) = 2p+2 > d_1(H)  + d_2(H)+1 = 2p+1.$ If $p,r \geq 1$ and $s=0,$ from Proposition \ref{pr5} and Lemma \ref{lemmaq1}, we get $q_1(H)+q_2(H)>d_1(H)+d_2(H)+1.$ Now, if $p \geq 1$ and $r \geq s \geq 1,$ from Proposition \ref{pr4} and Lemma \ref{lemmaq1}, follows that $q_1(H)+q_2(H)>d_1(H)+d_2(H)+1.$

Now, suppose that $H$ is isomorphic to $\mathcal{G}(p,q,r).$ If $p=s=0$ and $r \geq 1,$ $H = \mathcal{G}(0,r,0) = S_{r+2}$ and $q_1(H)+q_2(H)=r+3 = d_1(H)+d_2(H)+1.$  If $p=0$ and $r,s \geq 1,$ the result follows from Proposition \ref{pr3}. If $p=1$ and $r=s=0,$ then $H$ is the complete graph $K_3$ and $q_1(H)+q_2(H)=5 = d_1(H)+d_2(H)+1.$ If $p \geq 2$ and $r=s=0$, then $H = \mathcal{G}(p,0,0) = K_2 \vee \overline{K_{p}}$, i.e., the complete split graph, and it is well-known that $q_1(H)=(n+2+\sqrt{n^2+4n-12})/2$ and $q_{2}(H)=n-2.$ It is easy to check that for $p \geq 2$, we have $q_1(H)+ q_{2}(H) > d_{1}(H) + d_{2}(H)+1.$
If $p \geq 1, r \geq 1$ and $s \geq 0$, from the interlacing theorem (see \cite{HoJo85}) and the proof to the graph $\mathcal{H}(p,r,s)$, we get
$q_1(H)+q_2(H) \geq q_1(\mathcal{H}(p,r,s))+q_2(\mathcal{H}(p,r,s)) > d_1(\mathcal{H}(p,r,s))+d_2(\mathcal{H}(p,r,s))+1$ and the result of the theorem follows.

From the cases above, the equality conditions are restricted to the graphs $K_3$ and $S_n$ and the result follows.
\end{proof}

\vspace{0.5cm}

\begin{proof}
\noindent \textbf{of Theorem \ref{th4}}
Let $G$ be a simple connected graph on $ n\geq 3$ vertices. The result $\mu_1(G)+\mu_2(G) \geq d_1(G)+d_2(G)+1$ follows from Grone in \cite{G95}. Now, we need to prove the equality case.
Assume that $u$ and $v$ are the vertices with largest and second largest degrees of $G$, i.e., $d(u) = d_{1}(G)$ and $d(v) = d_{2}(G).$ Take $H$ as a subgraph of $G$ containing $u$ and $v$ isomorphic to $\mathcal{H}(p,q,r)$ or $\mathcal{G}(p,r,s).$ Note that $d_1(G)+d_2(G)=d_1(H)+d_2(H)$ and from interlace Theorem (see \cite{HoJo85})  $\mu_1(G)+\mu_2(G) \geq \mu_1(H)+\mu_2(H).$

Firstly, suppose that $H$ is isomorphic to $\mathcal{H}(p,r,s).$ In this case, $H$ is bipartite and $\mu_{i}(H) = q_{i}(H)$ for $i=1,\ldots,n$ (see \cite{CRS07}).  The proof is analogous to the Theorem \ref{th3} and the equality cases are similar, i.e., when $H = S_{3}.$

Now, suppose that $H$ is isomorphic to $\mathcal{G}(p,r,s).$ If $p=1, r=s=0$ then $H = \mathcal{G}(1,0,0)=K_3$ and $6 = \mu_1(H)+\mu_2(H) > d_1(H)+d_2(H)+1 = 5.$ If $p \geq 2, r=s=0,$ then $2p+4 = \mu_1(H)+\mu_2(H) > d_1(H)+d_2(H)+1 = 2p+1.$ The remanning cases are similar to the ones of the Theorem \ref{th3} and equality holds when $G = S_{n}.$
\end{proof}

\noindent \textbf{Acknowledgements:} Leonardo de Lima has been supported by CNPq Grant 305867/2012-1 and FAPERJ Grant 102.218/2013 and Carla Oliveira has also
been supported by CNPq Grant 305454/2012-9. 

\bigskip

\end{document}